\newtheorem{theorem}{Theorem}
\newtheorem{lemma}[theorem]{Lemma}
\newtheorem*{claim*}{Claim}
\theoremstyle{remark}
\newtheorem*{remark*}{Remark}
\theoremstyle{definition}
\newtheorem{conjecture}[theorem]{Conjecture}
\newtheorem{assumption}[theorem]{Assumption}
\numberwithin{theorem}{section}
\renewcommand{\phi}{\varphi}
\renewcommand{\emptyset}{\varnothing}
\renewcommand{\leq}{\le}
\renewcommand{\geq}{\ge}
\newcommand{\eps}{\varepsilon}
\renewcommand{\le}{\leqslant}
\renewcommand{\ge}{\geqslant}
\renewcommand{\P}{\mathbb{P}}
\newcommand{\R}{\mathbb R}
\newcommand{\C}{\mathbb C}
\newcommand{\E}{\mathbb E}
\newcommand{\sgn}{\operatorname{sgn}}
\title{On random polynomials with an intermediate number of real roots}
\author{Marcus Michelen}
\address{Department of Mathematics, Statistics and Computer Science \\ University of Illinois, Chicago  \\ 851 S.\ Morgan St. \\ Chicago, IL 60607 \\ USA}
\email{michelen@uic.edu}
\thanks{M. Michelen is supported in part by NSF grants DMS-2137623 and DMS-2246624.}
\author{Sean O'Rourke}
\address{Department of Mathematics\\ University of Colorado Boulder\\ Campus Box 395\\ Boulder, CO 80309-0395\\USA}
\email{sean.d.orourke@colorado.edu}
\thanks{S. O'Rourke is supported in part by NSF CAREER grant DMS-2143142.}
\subjclass[2010]{60F05, 26C10}
\begin{document}
	\begin{abstract}
        For each $\alpha \in (0, 1)$, we construct a bounded monotone deterministic sequence $(c_k)_{k \geq 0}$ of real numbers so that the number of real roots of the random polynomial $f_n(z) = \sum_{k=0}^n c_k \eps_k z^k$ is $n^{\alpha + o(1)}$ with probability tending to one as the degree $n$ tends to infinity, where $(\eps_k)$ is a sequence of i.i.d.\ (real) random variables of finite mean satisfying a mild anti-concentration assumption.  In particular, this includes the case when $(\eps_k)$ is a sequence of i.i.d.\ standard Gaussian or Rademacher random variables.  This result confirms a conjecture of O.\ Nguyen from 2019.  More generally, our main results also describe several statistical properties for the number of real roots of $f_n$, including the asymptotic behavior of the variance and a central limit theorem.  
	\end{abstract}	
\maketitle

\section{Introduction} \label{sec:intro}
Consider the random polynomial 
\begin{equation} \label{def:fn}
	f_n(z) = \sum_{k=0}^n c_k \eps_k z^k, 
\end{equation} 
where $(c_k)$ is a deterministic sequence of real numbers and $(\eps_k)$ is a sequence of independent and identically distributed (i.i.d.) real-valued random variables (e.g., $(\eps_k)$ is a sequence of i.i.d.\ standard Gaussian random variables).  Beginning with Bloch and P\'olya's work in the 1930s \cite{bloch1932roots}, researchers have studied the roots (both real and complex) of $f_n$ and used random polynomials for a variety of applications to many different areas of mathematics \cite{bloch1932roots,MR1690355,MR1290398,MR0856019,MR1574980}.  Among the earliest questions is how the number of real roots of $f_n$ depends on the sequence $(c_k)$. 

In the case when $(\eps_k)$ is a sequence of i.i.d.\ Gaussian random variables, the Kac--Rice formula and other tools which take advantage of special properties of the Gaussian distribution are available and allow one to write the moments of the number of real roots as a deterministic integral; see, for instance, \cite{MR2185759,MR2552864,MR0007812,MR1575245,MR1290398,MR1308023,MR0084888} and references therein.  Recent developments have focused on universality results, where $(\eps_k)$ is a sequence of i.i.d.\ non-Gaussian random variables, and one often wishes to understand how the behavior of the roots depends of the distribution of $(\eps_k)$.  We refer the reader to \cite{MR3602403,MR3439098,MR4367414,MR3846831,MR3262481,MR3771497,MR3280673,MR0139199,MR0286157,MR0234512,MR0234513} and references therein; however, this list is far from complete. 

In this note, we focus on the number of real roots of $f_n$.  
Some of the most widely studied models for this problem include the Kac model ($c_k \equiv 1$) and the Weyl model ($c_k = 1/\sqrt{k!}$).  
Let $R_n$ denote the number of real roots of $f_n$.  We review some previously studied models here:

\begin{itemize}
\item In the case when $(\eps_k)$ is a sequence of i.i.d.\ standard Gaussian random variables and $c_k = 1$ for all $k \geq 1$, it was shown by Kac \cite{MR0007812} that\footnote{Here, $o(1)$ denotes a term which tends to zero as $n$ tends to infinity; see Section \ref{sec:notation} for more details about the asymptotic notation used here and throughout the paper.}
\[ \E R_n = \frac{2}{\pi} \log n + C + o(1), \]
where $C$ is an explicit constant.  See \cite{nguyen2016number,erdos1956number,ibragimov1971average,ibragimov1971expected}  and the references therein for generalizations to other distributions. 

\item In the case when $(\eps_k)$ is a sequence of i.i.d.\ standard Gaussian random variables and $c_k = \frac{1}{\sqrt{k!}}$, it is known \cite{MR1290398,MR3439098} that 
\[ \E R_n = \left( \frac{2}{\pi} + o(1) \right) \sqrt{n}. \]
\item When $(\eps_k)$ is a sequence of i.i.d.\ Rademacher random variables and $c_1 = 1/2$, $c_k = 1/(k!)^k$ for $k \geq 2$, Littlewood and Offord \cite{MR1574980} showed that $f_n$ has all real roots with probability $1$.  
\item When $c_k = \exp(-k^\beta/2)$ and $(\eps_k)$ is a sequence of i.i.d.\ standard Gaussian random variables, Schehr and Majumdar \cite{schehr2009condensation} observed the following phase transition: \begin{equation*}
    \E R_n = \begin{cases}
        \left(\frac{2}{\pi} + o(1)\right)\log n & \text{ if }\beta \in [0,1) \\
        \left(\frac{2}{\pi} + o(1) \right)\sqrt{\frac{\beta - 1}{\beta}}n^{\beta/2} &\text{ if } \beta \in (1,2) \\
        (1 + o(1)) n &\text{ if }\beta > 2
    \end{cases}\,.
\end{equation*}
Notably, as $\beta$ increases, the expected number of real roots ``jumps'' from order $\log n$ to $n^{1/2+\eps}$ as $\beta$ crosses $1$.
\end{itemize}

Based on the above results, the following conjecture\footnote{The conjecture was formulated in a workshop at the American Institute of Mathematics and can be found online at \url{http://aimpl.org/randpolyzero/}.}
is natural.  
\begin{conjecture}[O. Nguyen] \label{conjecture}
Let $(\eps_k)$ be an i.i.d.\ sequence of standard Gaussian random variables.  Given $\alpha \in (0, 1)$, there exists a bounded sequence $(c_k)$ of real numbers so that $\E R_n = n^{\alpha + o(1)}$.  
\end{conjecture}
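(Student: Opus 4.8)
The plan is to reduce the conjecture to a deterministic estimate via the Kac--Rice formula, and then to exhibit an explicit convex ``log-profile'' for the coefficients that realizes the exponent $\alpha$.

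\emph{Reduction.} Since $(\eps_k)$ is i.i.d.\ standard Gaussian, $f_n$ is a centered Gaussian polynomial with covariance $K_n(x,y)=\E[f_n(x)f_n(y)]=\sum_{k=0}^{n}c_k^2 x^k y^k$. Normalizing the Gaussian processes $t\mapsto f_n(\pm e^{t})$ to unit variance and applying the Kac--Rice formula, together with the fact that $K_n(x,x)=\sum_k c_k^2 x^{2k}$ is even in $x$, one obtains the exact formula
\[
  \E R_n \;=\; \frac1\pi\int_{\R}\sqrt{L_n''(v)}\, dv,
  \qquad L_n(v):=\log\sum_{k=0}^{n}c_k^2\,e^{kv},
\]
where $L_n$ is smooth and convex (a log-sum-exp), so $L_n''\ge 0$. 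Hence it suffices to choose a bounded monotone sequence $(c_k)$ of positive reals for which $\int_{\R}\sqrt{L_n''(v)}\, dv=n^{\alpha+o(1)}$.

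\emph{Construction.} Take $c_k=\exp(-\tfrac12\psi(k))$, where $\psi\colon[0,\infty)\to[0,\infty)$ is the convex increasing function determined by $\psi(0)=\psi'(0)=0$ and $\psi''(t)=(1+t)^{2\alpha-2}$. Then $(c_k)$ is decreasing with $c_0=1$, so it is bounded and monotone. Write $w_k(v)=c_k^2 e^{kv}=e^{kv-\psi(k)}$; convexity of $\psi$ makes the sequence $(w_k(v))_k$ log-concave, so the probability measure on $\{0,\dots,n\}$ proportional to it has mean $L_n'(v)$ and variance $L_n''(v)$. A Laplace/saddle-point heuristic predicts that the mean is $\approx k^\ast(v):=(\psi')^{-1}(v)$ and the variance is $\approx 1/\psi''(k^\ast(v))$; changing variables by $v=\psi'(k)$ then gives
\[
  \int_{\R}\sqrt{L_n''(v)}\, dv \;\approx\; \int_0^{n}\sqrt{\psi''(k)}\, dk
  \;=\;\int_0^{n}(1+k)^{\alpha-1}\, dk\;\asymp\;n^{\alpha}.
\]
(For $\alpha\in(\tfrac12,1)$ this $\psi$ is comparable to $k^{2\alpha}$, recovering the Schehr--Majumdar profile $c_k=e^{-k^{\beta}/2}$ with $\beta=2\alpha$; for $\alpha=\tfrac12$ it is comparable to $k\log k$, i.e.\ a Weyl-type profile; and for $\alpha<\tfrac12$ one has $\psi(k)=(1+o(1))\,k/(1-2\alpha)$, a subexponential enhancement of a geometric profile.)

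\emph{Justifying the estimate, and the main obstacle.} It remains to prove $\int_{\R}\sqrt{L_n''(v)}\, dv=n^{\alpha+o(1)}$, which we do by splitting the $v$-axis according to where the mean $m(v):=L_n'(v)$ lies. On the bulk range, where $m(v)\in[n^{\eta},n-n^{\eta}]$ for a small fixed $\eta>0$, the scale $1/\sqrt{\psi''(m(v))}$ is much smaller than $m(v)$, so $\psi''$ is essentially constant on a window of that width around the mean; a quantitative concentration/local-limit estimate for the log-concave sequence $(w_k(v))_k$ then pins $L_n''(v)$ between constant multiples of $1/\psi''(m(v))$, and the substitution $v=\psi'(k)$ yields a contribution of order $n^{\alpha}$. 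On the two boundary ranges, where $m(v)$ is within $n^{\eta}$ of $0$ or of $n$ (including the regimes $v<0$ and $v>\psi'(n)$, where the would-be saddle leaves $\{0,\dots,n\}$ and $(w_k(v))_k$ is heavily truncated), one bounds $L_n''(v)$ by a crude upper estimate and uses that these $v$-ranges are short, so that each contributes at most $n^{\alpha-\delta}$ for some $\delta>0$. Adding the pieces gives $\E R_n=n^{\alpha+o(1)}$, proving the conjecture. The main obstacle is the analysis near $k\approx n$: the heuristic shows that for $\alpha\le\tfrac12$ the \emph{dominant} part of $\int_{\R}\sqrt{L_n''}$ comes from the short window of $v$ in which the saddle $m(v)$ sweeps up to $n$, and there the truncation at degree $n$ interacts with the saddle point, so one cannot invoke an unconditional Gaussian local limit theorem and instead needs genuine two-sided control of the variance of a truncated log-concave sequence. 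What makes this feasible is that only $n^{o(1)}$ accuracy is demanded, so polynomial-in-$\log n$ errors are harmless.
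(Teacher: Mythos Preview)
Your Kac--Rice/Edelman--Kostlan approach with a smooth convex log-profile is sound and can be pushed through, but it is genuinely different from what the paper does. The paper never touches Kac--Rice: it takes a piecewise-constant ``staircase'' sequence $c_k=\exp(-2^j)$ on blocks $I_j=(m_{j-1},m_j]$ with $m_j=2\lfloor j^{1/\alpha}\rfloor$, and shows by elementary domination arguments (plus Rouch\'e near the origin) that with overwhelming probability $f_n$ has exactly one positive and one negative real root per sign change of the ``corner'' coefficients $(\eps_{m_j})$. This buys far more than the conjecture asks: it works for any i.i.d.\ coefficients satisfying a mild anti-concentration hypothesis, not just Gaussians, and it pins $R_n$ to within $O((\log n)^{1/\alpha})$ of twice an explicit sum of independent indicators, giving the sharp leading constant, the variance, and a CLT. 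Your route is Gaussian-specific and aimed only at the $n^{\alpha+o(1)}$ scale; its payoff is that it explains the exponent analytically through the curvature $\psi''$ and, for $\alpha>\tfrac12$, recovers the Schehr--Majumdar family $c_k=e^{-k^{2\alpha}/2}$ on the nose. One point worth making explicit for $\alpha<\tfrac12$: since $\psi'$ is bounded, the leading part of $\psi$ is linear and hence invisible to $L_n''$ (a geometric factor in $c_k$ merely translates $v$ and gives Kac-type $\log n$ behavior); all of the $n^\alpha$ roots come from the sublinear correction $\asymp k^{2\alpha}$ in $\psi$, so the ``main obstacle'' you flag near $k\approx n$ is real, and the two-sided variance control of the truncated log-concave sequence there is precisely the missing ingredient.
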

Here, it is important that $c_k$ only depends on $k$; if $c_k$ is allowed to also depend on $n$, it becomes straightforward to establish Conjecture \ref{conjecture}.  

In this note, we confirm Conjecture \ref{conjecture} with a monotone decreasing sequence.

\begin{theorem} \label{thm:conjecture}
Let $(\eps_k)$ be an i.i.d.\ sequence of standard Gaussian random variables.  For any $\alpha \in (0, 1)$, there exists a monotone bounded sequence $(c_k)$ of real numbers so that $\E R_n = n^{\alpha + o(1)}$.  
\end{theorem}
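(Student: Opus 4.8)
The plan is to engineer the sequence $(c_k)$ so that the polynomial $f_n$ behaves, on different dyadic scales of $|z|$, like a Kac-type polynomial with a tunable "length" of contributing coefficients. The heuristic is that in the Schehr--Majumdar example $c_k = \exp(-k^\beta/2)$ with $\beta > 1$, the real roots come from the regions near $|z| = 1$ where many consecutive coefficients are comparable in size, and the count $n^{\beta/2}$ reflects how the window of comparable coefficients grows; the obstruction to getting an exponent in $(0, 1/2)$ this way is that the coefficient profile is too "spread out." To beat this, I would take $c_k$ to be \emph{piecewise constant on a lacunary (geometrically growing) sequence of blocks}, with the ratios between successive block-values decaying in a controlled way. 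Concretely, fix a large integer base $b$ and on the block $k \in [b^j, b^{j+1})$ set $c_k = \rho^{j}$ for a suitable $\rho \in (0,1)$ depending on $\alpha$ (possibly with a further slowly-varying correction); monotone decreasing and bounded are then immediate. The point is that near $|z|$ slightly less than $1$, only $O(\log n)$ many blocks contribute, each block contributing a Kac-like $\Theta(\log b)$ real roots from its own near-unit-circle behavior, but the \emph{effective} range of $z$ over which the root-producing mechanism operates is controlled by how fast $\rho^j$ decays, and this is exactly the knob that produces $n^{\alpha + o(1)}$.

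The key steps, in order, are: (1) \textbf{Reduction to a Kac--Rice integral.} Since $(\eps_k)$ is Gaussian, write $\E R_n = \int_{\R} \rho_n(t)\, dt$ where $\rho_n$ is the Kac--Rice density, expressed through the covariance kernel $K_n(s,t) = \sum_k c_k^2 (st)^k$ and its derivatives; by the $x \mapsto 1/x$ symmetry it suffices to integrate over $|t| \le 1$, and by the symmetry $t \mapsto -t$ and a standard argument that positive-real-axis behavior dominates, reduce to $t \in [0,1]$. (2) \textbf{Localization to the upper edge.} Show that the contribution to the integral from $t \le 1 - n^{-\delta}$ (for appropriate $\delta$) is negligible or of strictly smaller order, so the count is governed by $t \in [1 - n^{-\delta}, 1]$; here the lacunary block structure makes $K_n(t,t) = \sum_j \rho^{2j} \sum_{k \in \text{block } j} t^{2k}$ essentially a geometric-type sum one can estimate block by block. (3) \textbf{Scale-by-scale estimate.} On the window where $t^{b^j} \asymp 1$, identify which block index $j = j(t)$ dominates $K_n(t,t)$; show the local density $\rho_n(t)$ is comparable to the Kac density $\frac{1}{\pi}\sqrt{\frac{\partial^2}{\partial s \partial t}\log K_n|_{s=t}}$ for the truncated polynomial supported on that block, hence $\rho_n(t) \asymp \frac{b^{j(t)}}{1 - t}$ roughly, and integrate: $\int \rho_n(t)\,dt$ becomes a sum over $j$ of $\int_{\text{window }j} \frac{b^j}{1-t}\,dt \asymp b^j \cdot (\text{length of window }j)$. (4) \textbf{Tuning.} Choose $\rho$ (equivalently the decay rate of $c_k$ across blocks) so that this sum is $n^{\alpha + o(1)}$: the length of window $j$ where block $j$ dominates is determined by comparing $\rho^{2j} t^{2b^j}$ across consecutive $j$, which pins down $1 - t \asymp b^{-j}\log(1/\rho)^{-1}$ up to the next scale, making the per-block contribution roughly constant in $j$ and the total $\asymp (\text{number of blocks below the dominant one}) \times b^{j_\ast}$; solving $b^{j_\ast} = n^{\alpha}$ with $n = b^{j_{\max}}$ gives $j_\ast = \alpha j_{\max}$, achievable by an appropriate choice of how $c_k$ decays. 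Finally, combine with the $1/x$-symmetry doubling and check the $o(1)$ in the exponent absorbs all the $\log b$, $\log\log n$ factors.

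I expect the main obstacle to be \textbf{Step (3): the scale-by-scale comparison of the true Kac--Rice density $\rho_n(t)$ to the single-block truncated density.} The kernel $K_n(s,t)$ is a sum over all blocks, and near the transition points $t \approx 1 - b^{-j}$ two (or more) blocks contribute comparably, so one cannot cleanly replace $f_n$ by a single-block polynomial; one must show that the cross-terms and the neighboring blocks do not destroy the lower bound on the "curvature" $\partial_s\partial_t \log K_n - (\partial_t \log K_n)^2$ that drives the number of real roots. Controlling this requires careful two-sided estimates on $K_n$ and its first and second derivatives uniformly in the relevant window, likely via the decomposition $\log K_n = \log(\text{dominant block}) + \log(1 + \text{small})$ and Taylor-expanding, and then checking the correction terms are genuinely lower order — for the \emph{lower} bound on $\E R_n$ this is the delicate direction, since real roots can be lost to cancellation, and one may need to appeal to a more robust lower-bound mechanism (e.g.\ counting sign changes of $f_n$ on a deterministic net of points in the window, as in the non-Gaussian literature) rather than the raw Kac--Rice density. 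The upper bound, by contrast, should follow more routinely from $\E R_n = \int \rho_n$ together with crude bounds on $K_n$.
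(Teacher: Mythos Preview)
Your approach has a genuine gap at the construction itself, before Step~(3) ever becomes relevant. Taking $c_k = \rho^j$ on geometric blocks $k \in [b^j, b^{j+1})$ is, up to a bounded factor, polynomial decay $c_k \asymp k^{\log_b \rho}$, and regularly varying coefficient profiles of this kind give only $O(\log n)$ real roots, the same order as the Kac polynomial. Your Step~(4) heuristic is not internally consistent: if the per-block contribution were ``roughly constant in $j$'' the total would be at most the number of blocks, which is $O(\log_b n)$; if instead each block contributed like a Kac polynomial of effective degree $b^j$, that contribution is $O(j\log b)$ and the sum is $O((\log n)^2)$. Neither reaches $n^\alpha$, and the parameter $\rho$ only shifts where the dominant window sits, not the order of the count. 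A ``slowly varying correction'' cannot close a gap between polylogarithmic and polynomial.

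The paper's construction and method are both quite different. It takes $m_j = 2\lfloor j^{1/\alpha}\rfloor$ and sets $c_k = \exp(-2^j)$ for $k \in I_j = (m_{j-1},m_j]$, so there are $(1+o(1))(n/2)^\alpha$ blocks up to degree $n$ (not $\log n$), with \emph{doubly exponential} decay across blocks. The point of such violent decay is that for each real $t>0$ outside a thin exceptional set, a \emph{single} monomial $c_{m_j}\eps_{m_j}t^{m_j}$ dominates the whole polynomial with overwhelming probability; positive real roots then occur exactly at the transitions $j\to j+1$ with a sign change $\sgn(\eps_{m_j})\neq\sgn(\eps_{m_{j+1}})$, one root per such transition (monotonicity of a rescaling of $g_n$ rules out multiple roots). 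Thus $R_n$ equals, up to an $O((\log n)^{1/\alpha})$ error handled by Rouch\'e near the origin, twice the number of sign changes in an i.i.d.\ sequence of length $\sim (n/2)^\alpha$. No Kac--Rice is used; the argument is triangle inequality plus Markov plus anti-concentration, works for general coefficients satisfying Assumption~\ref{assumption}, and delivers variance asymptotics and a CLT for free since the sign-change count is a sum of $1$-dependent indicators. The ``sign changes on a deterministic net'' idea you mention as a fallback is, in fact, the entire proof --- once the decay is chosen fast enough to force single-term domination.
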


More generally, our main result below (Theorem \ref{thm:main}) establishes several other statistical properties of $R_n$ for our construction, including a central limit theorem, and shows these properties hold for many other coefficient distributions, beyond the Gaussian case.  

We now turn to the construction of the deterministic sequence $(c_k)$.  
Fix $\alpha \in (0,1)$.  Define the sequence $m_j := 2\lfloor j^{1/\alpha} \rfloor$ and the interval of integers $I_j := (m_{j-1},m_{j}]$; we set $m_{-1} = -1$ so that way the sequence $(I_j)_{j \geq 0}$ forms a partition of $\{0,1, 2, \ldots\}$.  We now define the sequence $(c_k)_{k \geq 0}$ via  
\begin{equation} \label{def:cmj}
	c_{k} = \exp(- 2^j) \qquad\text{ if }\qquad k \in I_j\,.
\end{equation} 
Note that the sequence $(c_k)$ depends on $\alpha$, but we do not denote this dependence in our notation.  

We will assume $(\eps_k)$ is a sequence of i.i.d.\ random variables satisfying the following properties.
\begin{assumption} \label{assumption}
 Let $(\eps_k)_{k \geq 0}$ be a sequence of real-valued i.i.d.\ random variables which satisfy the following assumptions: 
\begin{enumerate}[(i)]
\item \label{assump:symmetric} $\P(\eps_k > 0) =: p \in (0,1)$

\item \label{assump:subgaussian} $\mathbb{E} |\eps_k| \leq 1$ 

\item \label{assump:anticoncentration} There exists a constant $ C_0 > 0$ so that $\P( |\eps_k| \leq t) \leq C_0 t$ for all $t \geq 0$. 
\end{enumerate}
\end{assumption}

Since the number of real roots is invariant under scaling the polynomial, Condition \eqref{assump:subgaussian} of Assumption \ref{assumption} simply asserts that the random variables lie in $L^1$. 
Condition \eqref{assump:anticoncentration} implies that $\eps_k$ is non-zero with probability $1$.  In particular, both the standard Gaussian distribution and the Rademacher distribution satisfy Assumption \ref{assumption}.  We note that Condition \eqref{assump:subgaussian} and \eqref{assump:anticoncentration} can be relaxed considerably, although we do not pursue this direction.  

Fix $\alpha \in (0, 1)$.  Define the random polynomial $f_n$ of degree $n$ by \eqref{def:fn}, where $(c_k)$ is the sequence defined in \eqref{def:cmj} and $(\eps_j)$ is a sequence of i.i.d.\ real-valued random variables satisfying Assumption \ref{assumption}.  
Note that $f_n$ has $(1 + o(1)) (n/2)^\alpha$ ``blocks'' of coefficients, i.e.,\ if we set $j_\ast = \max\{j : I_j  \cap \{0,1,\ldots,n\} \neq \emptyset \}$ then $j_\ast = (1 + o(1)) (n/2)^\alpha$.  The idea will be that the terms indexed by the sequence $(m_j)_{j \geq 0}$ will dictate the behavior of the polynomial with high probability.  As such, we will let $S_n$ denote the number of sign changes among the sequence $(\eps_{m_j})_{j = 0}^{j_\ast-1} \cup (\eps_n)$ 
(note that by Condition \eqref{assump:anticoncentration} $\eps_j$ is non-zero with probability $1$).  

We say an event $\mathcal{E}$ (which depends on $n$) holds with \emph{overwhelming probability} if for every $\kappa > 0$ there exists a constant $C_\kappa > 0$ so that $\P(\mathcal{E}) \geq 1 - C_\kappa n^{-\kappa}$ for all $n > C_\kappa$.  

\begin{theorem}\label{thm:main}
Fix $\alpha \in (0, 1)$.  Define the random polynomial $f_n$ by \eqref{def:fn}, where $(c_k)$ is the sequence defined in \eqref{def:cmj} and $(\eps_k)$ is a sequence of i.i.d.\ random variables satisfying Assumption \ref{assumption}.  
Let $R_n$ be the number of real roots of $f_n$, and let $S_n$ be the number of sign changes among the sequence $(\eps_{m_j})_{j = 0}^{j_\ast - 1} \cup (\eps_n)$.  Then there exists an absolute constant $C > 0$ so that the event $|R_n - 2S_n| \leq C (\log n)^{1/\alpha}$ holds with overwhelming probability.  
In particular, there are constants $c_p$ and $c_p'$ depending only on $p$ from Condition \eqref{assump:symmetric} so that $\E R_n \sim c_p (n/2)^\alpha$, $\operatorname{Var} R_n \sim c_p'(n/2)^\alpha$, $R_n / (c_p(n/2))^\alpha \to 1$ almost surely as $n \to \infty$, and $$\frac{R_n - \E R_n}{\sqrt{\operatorname{Var} R_n}} \longrightarrow \mathcal{N}(0,1)$$
in distribution as $n \to \infty$, where $\mathcal{N}(0,1)$ denotes the standard normal distribution.  
\end{theorem}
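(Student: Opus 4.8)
The plan is to prove the structural statement $|R_n - 2S_n| \le C(\log n)^{1/\alpha}$ with overwhelming probability, and then deduce every asymptotic claim from known facts about sign changes in i.i.d.\ sequences. The heuristic is that on each block $I_j$ the coefficient magnitude $\exp(-2^j)$ is enormously larger than $\exp(-2^{j+1})$ on the next block, so on a suitable annulus $|z| \approx$ (something tied to the $j$-th scale) the polynomial $f_n$ is, up to tiny error, governed by the monomials coming from the two neighbouring ``active'' blocks; a sign change between consecutive representative coefficients $\eps_{m_{j-1}}$ and $\eps_{m_j}$ forces a real root near that scale (and, by the $z \mapsto -z$ symmetry of the coefficient pattern, a second one on the negative axis), while the absence of a sign change forbids one there. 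Summing over $j$ turns the count of real roots into $2S_n$ up to boundary effects confined to $O((\log n)^{1/\alpha})$ indices.

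The key steps, in order. First I would fix dyadic-type radii $r_j$ (roughly where the term $c_{m_{j-1}} r^{m_{j-1}}$ and $c_{m_j} r^{m_j}$ balance, i.e.\ $\log(1/r_j) \asymp 2^j / m_j \asymp 2^j j^{-1/\alpha}$) and show that on the real interval between $r_{j}$ and $r_{j+1}$ the two ``dominant'' monomials exceed the sum of all other terms in absolute value with overwhelming probability. This is where Assumption~\ref{assumption}(iii) enters: the anti-concentration bound $\P(|\eps_k| \le t) \le C_0 t$, together with the rough upper bound $\E|\eps_k| \le 1$ and a union bound over the $n+1$ coefficients, lets me say that with overwhelming probability all $|\eps_k|$ lie in $[n^{-\kappa}, \text{poly}(n)]$, and the super-exponential separation of the $c_k$ then dwarfs these polynomial factors. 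Second, on the event that the dominant pair has opposite signs, an intermediate-value argument on $[r_j, r_{j+1}]$ (and on $[-r_{j+1},-r_j]$) produces at least one real root in each; on the event of equal signs, I would show $f_n$ (or rather the dominant part, with the error controlled) is sign-definite there, so no root. Third, I would check that consecutive intervals overlap/abut so these regions cover all of $\R \setminus \{0\}$ except a bounded-scale core near $|z| \approx 1$ and a region near $|z| = \infty$, each contributing only $O((\log n)^{1/\alpha})$ possible roots (e.g.\ via a crude Jensen/argument-principle bound on the number of zeros in the leftover annuli, using that only $O((\log n)^{1/\alpha})$ blocks sit there). Combining the lower bound $R_n \ge 2S_n - O((\log n)^{1/\alpha})$ and the upper bound $R_n \le 2S_n + O((\log n)^{1/\alpha})$ gives the claim.

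For the statistical consequences: $S_n$ is the number of sign changes in a sequence of $j_\ast + 1 = (1+o(1))(n/2)^\alpha$ i.i.d.\ signs, each positive with probability $p$; writing $S_n = \sum_{j} \indf{\sgn \eps_{m_{j-1}} \ne \sgn \eps_{m_j}}$ as a sum of $1$-dependent indicators, classical results give $\E S_n = (1+o(1))\,2p(1-p)\,(n/2)^\alpha$, a linear-order variance $\operatorname{Var} S_n \sim c'(n/2)^\alpha$, a strong law, and a CLT (e.g.\ the $m$-dependent CLT of Hoeffding--Robbins, or a martingale/Stein argument). Since $(\log n)^{1/\alpha} = o((n/2)^{\alpha/2})$ for every $\alpha \in (0,1)$, the error term $|R_n - 2S_n|$ is negligible compared to $\sqrt{\operatorname{Var} R_n}$, and the overwhelming-probability bound is summable so Borel--Cantelli upgrades the almost-sure statement from $2S_n$ to $R_n$; setting $c_p = 4p(1-p)$ and $c_p'$ the corresponding variance constant finishes everything, and Theorem~\ref{thm:conjecture} follows by specializing to the Gaussian case and noting $c_p(n/2)^\alpha = n^{\alpha + o(1)}$.

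The main obstacle I expect is the first step: making rigorous and uniform (over all of $\R$ simultaneously, not just at fixed radii) the claim that the two neighbouring dominant monomials control $f_n$, including a clean treatment of the transition zones where one dominant block hands off to the next, and ensuring the ``leftover'' regions near $|z|\asymp 1$ and near infinity genuinely contribute only $O((\log n)^{1/\alpha})$ zeros rather than something larger — this is precisely the polynomial-versus-super-exponential bookkeeping that the peculiar choices $m_j = 2\lfloor j^{1/\alpha}\rfloor$ and $c_k = \exp(-2^j)$ are engineered to make work.
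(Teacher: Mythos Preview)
Your approach mirrors the paper's almost exactly: the paper also passes to exponential coordinates $g_n(t)=f_n(e^t)$, defines intervals $[a_j,b_j]$ on which the single monomial $c_{m_j}e^{m_j t}\eps_{m_j}$ dominates and transition intervals $[b_j,a_{j+1}]$ on which the pair $\{m_j,m_{j+1}\}$ dominates, proves dominance with probability $1-Ce^{-cj^2}$ via Markov plus the anti-concentration hypothesis, and handles the small-$|z|$ region by a Rouch\'e argument at radius $e^{b_{\log n}}$, which caps the number of zeros there by $m_{\log n}=O((\log n)^{1/\alpha})$. Your deduction of the mean, variance, LLN and CLT for $R_n$ from those for $S_n$ is also what the paper does.

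The one genuine gap is the upper bound $R_n \le 2S_n + O((\log n)^{1/\alpha})$. Your intermediate-value argument gives \emph{at least} one root in each sign-change interval, and sign-definiteness excludes roots from the no-change intervals, but nothing in your sketch rules out \emph{several} roots in a single sign-change interval $[b_j,a_{j+1}]$. Dominance of the two leading monomials in $L^\infty$ is not enough by itself: a small sup-norm perturbation of a function with one zero can have three. The paper handles this with an explicit derivative argument: write $g_n(t)=e^{t(m_{j+1}+m_j)/2}h_n(t)$ and show $h_n'(t)>0$ on $[b_j,a_{j+1}]$ with probability $1-Ce^{-cj^2}$, by rerunning the same dominance estimates on the differentiated sum (the rescaling makes the two leading terms of $h_n'$ have the \emph{same} sign under the sign-change hypothesis, so they cooperate rather than cancel). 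You should add this step; without it the upper bound does not follow.
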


Theorem \ref{thm:conjecture} follows immediately from Theorem \ref{thm:main}.  We outline the proof in the following short subsection and prove Theorem \ref{thm:main} in Section \ref{sec:proof}.  

\subsection{Outline and sketch of the proof of Theorem \ref{thm:main} }

The intuition for why $f_n$ has $n^{\alpha + o(1)}$ many real roots is that for each $t \in \R$ we have one of two types of behavior: either the $m_j$ term dominates, in which case we do not have a real root with high probability; or we are transitioning from when the $m_j$ term dominates to when the $m_{j+1}$ term dominates, in which case we have a real root if and only if there is a sign change among $\{\eps_{m_j}, \eps_{m_{j+1}}\}$ with high probability.  Since we have chosen $m_j$ to always be an even integer, the behavior on the negative real axis is quite similar to that on the positive real axis, and so we only focus on the positive real axis.

In order to confirm this heuristic, we switch to exponential coordinates and define $g_n(t) = f_n(e^t)$ and first write a deterministic lemma that drives all of our comparisons between terms (Lemma \ref{lemma:increment-calculation}). Here we introduce disjoint intervals $[a_j,b_j]$ where the $m_j$ term of $g_n$ dominates on $[a_j,b_j]$.  We then convert our deterministic Lemma \ref{lemma:increment-calculation} to a probabilistic statement showing that in fact the $m_j$ term of $g_n$ does dominate on $[a_j,b_j]$ with high probability (Lemma \ref{lemma:anticoncentration}).  This immediately implies there are no real roots in $[a_j,b_j]$ with high probability (Lemma \ref{lem:no-root}) and that if $\{\eps_{m_j},\eps_{m_{j+1}}\}$ are the same sign then there is no real root in $[b_j,a_{j+1}]$ with high probability (Lemma \ref{lem:no-change}).  To handle the case when we \emph{do} have a sign change among $\{\eps_{m_j},\eps_{m_{j+1}}\}$, Lemma \ref{lemma:anticoncentration} shows that $g_n$ has a sign change on $[b_j,a_{j+1}]$ with high probability thus showing that there is a real root; in order to show that there is only one real root, we show that the derivative of a rescaled version of $g_n$ is nonzero on $[b_j,a_{j+1}]$ with high probability (Lemma \ref{lem:yes-change}). Finally, a simple application of Rouch\'e's Theorem shows that there are not too many real roots near the origin (Lemma \ref{lem:rouche}), allowing us to use the asymptotic results we have proven up until that point.

\subsection{Notation} \label{sec:notation}
Starting in Section \ref{sec:proof}, unless otherwise noted, we use asymptotic notation under the assumption that $j \to \infty$; this differs from Section \ref{sec:intro}, where we used asymptotic notation under the assumption that $n \to \infty$.  For two sequences $(a_j)$ and $(b_j)$, we write $a_j = O(b_j)$ if there exists a constant $C > 0$ so that $|a_j| \leq C b_j$ for all $j > C$.  If the constant $C$ depends on other parameters, e.g., $C = C_\alpha$, we denote this with subscripts, e.g., $a_j = O_\alpha(b_j)$.  We write $a_j = o(b_j)$ to denote that $ \lim_{j \to \infty} a_j/b_j = 0$.  The notation $a_j \sim b_j$ denotes that $a_j = (1 + o(1)) b_j$.  We use $i$ as an index (i.e., $i$ does not denote the imaginary unit).  We let $C, c$ be positive constants (i.e., they do not depend on $n$ or $j$) that may change from one occurrence to the next.

\section{Proof of Theorem \ref{thm:main}} \label{sec:proof}

It will often be more convenient to work in exponential coordinates, and  we write $g_n(t) := f_n(e^t).$   Fix $\alpha \in (0,1)$ and set $\beta := \min\{1/2,\frac{1-\alpha}{2\alpha}\}$.  For each $j \geq 1$, define $$a_j :=  \alpha j^{1-1/\alpha} 2^{j-2}(1 + j^{-\beta})\qquad \text{ and } \qquad b_j := \alpha j^{1 - 1/\alpha} 2^{j-1}(1 - j^{-\beta})\,.$$
It follows that that there exists a constant $C > 0$ (depending only on $\alpha$) so that $a_j < b_j$ for all $j > C$ and $(a_j)_{j > C}$ and $(b_j)_{j > C}$ are increasing sequences.  
Further define the functions $\varphi_j(t):= c_{m_j} e^{t m_j}$ and set $j_\ast = \max\{j : I_j  \cap \{0,1,\ldots,n\} \neq \emptyset \}$ so we may write 
$$g_n(t) = \sum_{ j < j_\ast } \varphi_j(t) \sum_{i = 0}^{m_j - m_{j-1} - 1} \eps_{m_j - i} e^{-it} + \sum_{i = m_{j_\ast - 1} + 1}^{n} c_{i}\eps_{i} e^{it}\,.$$

We write $g_n$ in this form to note that the blocks for $j < j_\ast$ are ``complete'' while the last block may potentially be ``incomplete.''
We prove Theorem \ref{thm:main} via a sequence of lemmas, starting with a deterministic lemma showing that $\varphi_j$ dominates on the interval $[a_j,b_j]$: 

\begin{lemma}\label{lemma:increment-calculation}
    There are constants $C, c > 0$ depending only on $\alpha$ so that for all $j \geq 1$ we have 
    \begin{equation} \label{eq:left-increment}
    \log\left(\frac{\varphi_{j-1}(t)}{\varphi_j(t)} \right) \leq -c 2^j j^{-\beta} + C \end{equation}
    for any $t \geq a_j$ and  
    \begin{equation} \label{eq:right-increment}
    \log\left(\frac{\varphi_{j+1}(t)}{\varphi_j(t)} \right) \leq -c 2^j j^{-\beta} + C  \end{equation}
    for all $t \leq b_j$. 
    In particular, there exist constants $C', c' > 0$ depending only on $\alpha$ so that
    \begin{equation} \label{eq:left-right-increment-large}
    	\log\left(\frac{\varphi_{j-1}(t)}{\varphi_j(t)} \right) \leq -c' 2^j j^{-\beta} \quad \text{ and } \quad \log\left(\frac{\varphi_{j+1}(t)}{\varphi_j(t)} \right) \leq -c' 2^j j^{-\beta} 
    \end{equation}
    for all $j \geq C'$, where the first bound holds for all $t \geq a_j$ and the second for all $t \leq b_j$.  
\end{lemma}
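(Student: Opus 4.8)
The quantity $\log(\varphi_{j-1}(t)/\varphi_j(t))$ is affine in $t$: since $\varphi_j(t) = c_{m_j} e^{tm_j} = \exp(-2^j)e^{tm_j}$ and $\varphi_{j-1}(t) = \exp(-2^{j-1})e^{tm_{j-1}}$, we get
\[
\log\left(\frac{\varphi_{j-1}(t)}{\varphi_j(t)}\right) = 2^j - 2^{j-1} - t(m_j - m_{j-1}) = 2^{j-1} - t(m_j - m_{j-1}).
\]
Because $m_j - m_{j-1} > 0$ for large $j$, this is a decreasing function of $t$, so it suffices to check the bound at the left endpoint $t = a_j$. Recalling $m_j = 2\lfloor j^{1/\alpha}\rfloor = 2j^{1/\alpha} + O(1)$, one has $m_j - m_{j-1} = 2\alpha^{-1} j^{1/\alpha - 1}(1 + o(1))$ by the mean value theorem. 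Plugging in $a_j = \alpha j^{1-1/\alpha} 2^{j-2}(1 + j^{-\beta})$, the product $a_j(m_j - m_{j-1})$ is $2^{j-1}(1 + j^{-\beta})(1 + o(1))$, so the leading $2^{j-1}$ terms cancel and what remains is $-2^{j-1} j^{-\beta}(1 + o(1))$ plus lower-order error, which is $\le -c\, 2^j j^{-\beta} + C$ for suitable $c, C$. The second inequality \eqref{eq:right-increment} is completely symmetric: $\log(\varphi_{j+1}(t)/\varphi_j(t)) = -2^j + t(m_{j+1} - m_j)$ is \emph{increasing} in $t$, so one checks it at the right endpoint $t = b_j = \alpha j^{1-1/\alpha}2^{j-1}(1 - j^{-\beta})$, and the same cancellation of the main $2^j$ term leaves $-2^j j^{-\beta}(1 + o(1))$ up to lower-order terms.

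For the final assertion \eqref{eq:left-right-increment-large}, I would simply note that for $j$ large enough the additive constant $C$ is dominated by a fraction of the main term $c\,2^j j^{-\beta}$ — concretely, choose $C'$ so that $C \le \tfrac12 c\, 2^j j^{-\beta}$ for all $j \ge C'$, and set $c' = c/2$. This is immediate since $2^j j^{-\beta} \to \infty$.

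\textbf{The main obstacle} is bookkeeping the floor-function and error terms carefully enough to see that the cancellation of the dominant $2^j$-scale terms is exact and that all surviving error terms are genuinely $o(2^j j^{-\beta})$ — in particular one must track that $m_j - m_{j-1}$ differs from $2\alpha^{-1}j^{1/\alpha-1}$ by a multiplicative $(1 + O(j^{-1}))$ (not merely an additive $O(1)$, which would be fatal after multiplying by $a_j \sim 2^j$), and that the choice $\beta \le \tfrac{1-\alpha}{2\alpha}$ is exactly what guarantees $j^{-\beta}$ decays slowly enough relative to these $j^{-1}$-type corrections that $j^{-\beta}$ dominates them, i.e.\ $2^j j^{-\beta}$ really is the leading remainder. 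Once the asymptotics of $m_j$ and of the endpoints $a_j, b_j$ are pinned down, the argument is a one-line affine computation at each endpoint, so there is no structural difficulty beyond this careful error analysis.
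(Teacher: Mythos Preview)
Your approach is correct and essentially identical to the paper's: both compute the log-ratio as an affine function of $t$, evaluate at the relevant endpoint $a_j$ or $b_j$, expand $m_j - m_{j\pm 1}$ via the mean value theorem, observe the cancellation of the leading $2^{j-1}$ (resp.\ $2^j$) term, and then use the choice of $\beta$ to ensure the residual error $O_\alpha(j^{1-1/\alpha} + j^{-1})$ is dominated by $j^{-\beta}$. One small correction to your closing commentary: $a_j \sim \alpha j^{1-1/\alpha} 2^{j-2}$, not $2^j$, and it is precisely this extra factor $j^{1-1/\alpha}$ that renders the additive $O(1)$ floor error harmless (it contributes $O(2^j j^{1-1/\alpha}) = o(2^j j^{-\beta})$ since $\beta < 1/\alpha - 1$), so that error is not in fact ``fatal.''
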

\begin{proof}
    For $t \geq a_j$, we compute \begin{align*}
        \log\left(\frac{\varphi_{j-1}(t)}{\varphi_j(t)} \right) &\leq -2^{j-1} + 2^j + a_j\left(2\lfloor (j-1)^{1/\alpha} \rfloor  -  2\lfloor j^{1/\alpha} \rfloor\right) \\
        &= 2^{j-1} - \left(\alpha j^{1 - 1/\alpha} 2^{j-2}(1 + j^{-\beta})\right)\left(\frac{2j^{1/\alpha - 1}}{\alpha} + O_\alpha(1 + j^{1/\alpha - 2})\right) \\
        &= -2^{j-1}\left(j^{-\beta} + O_\alpha(j^{1 - 1/\alpha} + j^{-1}) \right)\,.
    \end{align*}
    By the choice of $\beta$ we note that  $\beta < 1$ and $\beta < 1/\alpha - 1$ and so we see that there are constants $C, c > 0$ so that  \eqref{eq:left-increment} holds.  
    Similarly, for $t \leq b_j$, we compute  
    \begin{align*}
        \log\left(\frac{\varphi_{j+1}(t)}{\varphi_j(t)} \right) &\leq 2^j  -2^{j+1} + b_j\left( 2\lfloor (j+1)^{1/\alpha} \rfloor - 2\lfloor j^{1/\alpha} \rfloor \right) \\
        &= -2^j + \left(\alpha j^{1 - 1/\alpha} 2^{j-1}(1 - j^{-\beta})\right)\left(\frac{2j^{1/\alpha - 1}}{\alpha} + O_\alpha(1 + j^{1/\alpha - 2})\right) \\
        &= -2^{j}\left(j^{-\beta} + O_\alpha(j^{1 - 1/\alpha} + j^{-1}) \right)\,.
    \end{align*}
    The choice of $\beta$ again establishes \eqref{eq:right-increment}.  The bounds in \eqref{eq:left-right-increment-large} follow from \eqref{eq:left-increment} and \eqref{eq:right-increment}. 
\end{proof}

We now extend this deterministic bound to show that the $m_j$ term of our random polynomial dominates on $[a_j,b_j]$:

\begin{lemma} \label{lemma:anticoncentration}
There exist constants $C, c > 0$ (depending only on $\alpha$ and $C_0$) so that the following holds.  
\begin{itemize}
\item For any $j \geq 1$, with probability at least $1 - C \exp(-c j^2)$,

\begin{equation} \label{eq:anticoncentration-full}
	\min_{a_j \leq s \leq b_j} \left( \frac{c_{m_j}e^{m_js} |\eps_{m_j}|}{2} - \sum_{i \neq m_j} |\eps_{i}|  c_i e^{is} \right) > 0. 
\end{equation}
\item For any $j \geq 1$, with probability at least $1 - C \exp(-c j^2)$, 
\begin{equation} \label{eq:anticoncentration-left}
	\inf_{a_j \leq s} \left(  \frac{c_{m_j}e^{m_js} |\eps_{m_j}|}{2} - \sum_{i < m_j} |\eps_{i}|  c_i e^{is} \right) > 0. 
\end{equation}
\item For any $j \geq 1$, with probability at least $1 - C \exp(-c j^2)$, 
\begin{equation} \label{eq:anticoncentration-right}
	\inf_{s \leq b_j} \left(  \frac{c_{m_j}e^{m_js} |\eps_{m_j}|}{2}- \sum_{i > m_j} |\eps_{i}|  c_i e^{is} \right) > 0. 
\end{equation} 
\item With probability at least $1 - C \exp(-c j_\ast^2)$, 
\begin{equation} \label{eq:highest-term}
    \inf_{a_{j_\ast} \leq s} \left(c_n e^{ns} |\eps_{n}| - \sum_{i < n} |\eps_i| c_i e^{is} \right) > 0\,.
\end{equation}

\end{itemize}
\end{lemma}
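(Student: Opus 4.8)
The plan is to exploit monotonicity in $s$ to reduce each of the four assertions to an inequality at a single point, and then to combine the iterated deterministic bound of \cref{lemma:increment-calculation} with (i)~an anti-concentration lower bound on $|\eps_{m_j}|$ coming from Condition~\eqref{assump:anticoncentration} and (ii)~a block-by-block first-moment upper bound on the remaining $|\eps_i|$ coming from Condition~\eqref{assump:subgaussian}. Throughout we may assume that $j$ exceeds any fixed constant depending on $\alpha$ and $C_0$, since for smaller $j$ the asserted probability $1-C\exp(-cj^2)$ is nonpositive once $C$ is large. For the reduction, note that for $i\ne m_j$ the ratio $c_ie^{is}/\varphi_j(s)=(c_i/c_{m_j})\,e^{(i-m_j)s}$ is decreasing in $s$ when $i<m_j$ and increasing when $i>m_j$. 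Dividing the expressions in \eqref{eq:anticoncentration-full}--\eqref{eq:anticoncentration-right} by $\varphi_j(s)>0$ and pushing the supremum of each ratio onto the relevant endpoint, it suffices to show that with probability $1-C\exp(-cj^2)$,
\begin{gather*}
\sum_{i<m_j}|\eps_i|\,c_ie^{ia_j}<\tfrac14\,\varphi_j(a_j)\,|\eps_{m_j}|,\\
\sum_{i>m_j}|\eps_i|\,c_ie^{ib_j}<\tfrac14\,\varphi_j(b_j)\,|\eps_{m_j}|.
\end{gather*}
The first inequality gives \eqref{eq:anticoncentration-left} (with $\tfrac12$ in place of $\tfrac14$), the second gives \eqref{eq:anticoncentration-right}, and the two together give \eqref{eq:anticoncentration-full}, because on $[a_j,b_j]$ the $i<m_j$ contributions to $\sum_{i\ne m_j}|\eps_i|c_ie^{is}/\varphi_j(s)$ are maximized at $s=a_j$ and the $i>m_j$ contributions at $s=b_j$.

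Next I would establish the deterministic comparison at these two points. Iterating \eqref{eq:left-increment}--\eqref{eq:right-increment} over consecutive blocks---legitimate since $(a_k)$ and $(b_k)$ are eventually increasing, so $a_j\ge a_k$ for $k\le j$ and $b_j\le b_k$ for $k\ge j$, while the additive constant and the finitely many small-index terms are absorbed by the dominant term $2^jj^{-\beta}$---gives $\varphi_\ell(a_j)\le\varphi_j(a_j)\exp(-c'2^jj^{-\beta})$ for every $\ell<j$ and $\varphi_\ell(b_j)\le\varphi_j(b_j)\exp(-c'2^{\ell-1}(\ell-1)^{-\beta})$ for every $\ell>j$ (for $\ell>j$ the accumulated exponent is $\sum_{k=j}^{\ell-1}2^kk^{-\beta}$, which is at least its largest term). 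Since $c_i=c_{m_\ell}$ on $I_\ell$ and $a_j,b_j>0$, this yields $c_ie^{ia_j}\le\varphi_\ell(a_j)$ and $c_ie^{ib_j}\le\varphi_\ell(b_j)$ for $i\in I_\ell$; and for $i\in I_j$ with $i<m_j$ one has $c_ie^{ia_j}=\varphi_j(a_j)\,e^{(i-m_j)a_j}\le\varphi_j(a_j)\,e^{-a_j}$. Summing over blocks,
\begin{gather*}
\sum_{i<m_j}|\eps_i|\,c_ie^{ia_j}\;\le\;\varphi_j(a_j)\Bigl(e^{-c'2^jj^{-\beta}}\sum_{i\le m_{j-1}}|\eps_i|\;+\;e^{-a_j}\sum_{i\in I_j}|\eps_i|\Bigr),\\
\sum_{i>m_j}|\eps_i|\,c_ie^{ib_j}\;\le\;\varphi_j(b_j)\sum_{\ell=j+1}^{j_\ast}e^{-c'2^{\ell-1}(\ell-1)^{-\beta}}\sum_{i\in I_\ell}|\eps_i|.
\end{gather*}

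For the probabilistic input, Condition~\eqref{assump:anticoncentration} gives $\P(|\eps_{m_j}|<e^{-j^2})\le C_0e^{-j^2}$, and Condition~\eqref{assump:subgaussian} together with Markov's inequality gives $\P\bigl(\sum_{i\in I}|\eps_i|>|I|\,t\bigr)\le 1/t$ for any finite index set $I$ and any $t>0$. I would apply the latter with $I=\{0,\dots,m_{j-1}\}$ and $t=e^{c'2^jj^{-\beta}/2}$, with $I=I_j$ and $t=e^{a_j/2}$, and with $I=I_\ell$ and $t=e^{c'2^{\ell-1}(\ell-1)^{-\beta}/2}$ for each $j<\ell\le j_\ast$. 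The exceptional probabilities $C_0e^{-j^2}$, $e^{-c'2^jj^{-\beta}/2}$, $e^{-a_j/2}$ and $\sum_{\ell>j}e^{-c'2^{\ell-1}(\ell-1)^{-\beta}/2}$ add up to at most $C\exp(-cj^2)$, since $2^jj^{-\beta}\gg j^2$, since $\beta<1/\alpha-1$ forces $a_j\asymp 2^jj^{1-1/\alpha}\gg j^2$, and since the last sum is super-geometric in $\ell$ and hence comparable to its first term $e^{-c'2^jj^{-\beta}/2}$. On the complementary event---which also includes $|\eps_{m_j}|\ge e^{-j^2}$---the two displays above are at most $\varphi_j(a_j)\bigl((m_{j-1}{+}1)e^{-c'2^jj^{-\beta}/2}+|I_j|\,e^{-a_j/2}\bigr)$ and $\varphi_j(b_j)\sum_{\ell>j}|I_\ell|\,e^{-c'2^{\ell-1}(\ell-1)^{-\beta}/2}$; as $|I_\ell|=O_\alpha(\ell^{1/\alpha-1})$ is polynomial in $\ell$ while every exponential factor is super-exponentially small in $j$, each of these is at most $\tfrac14 e^{-j^2}$ times the relevant value $\varphi_j(a_j)$ or $\varphi_j(b_j)$, hence at most $\tfrac14|\eps_{m_j}|$ times it, for $j$ large---which is exactly what the reduction requires. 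Finally, \eqref{eq:highest-term} is handled the same way with $n$ and the monomial $c_ne^{ns}$ in the roles of $m_j$ and $\varphi_j$: for $i<n$ the ratio $c_ie^{is}/(c_ne^{ns})$ is decreasing in $s$, so one checks the inequality at the left endpoint, controlling the remaining indices of the (possibly incomplete) last block by the factor $e^{-a_{j_\ast}}$ arising from $c_ie^{is}<c_ne^{ns}$ for $s>0$, and the lower blocks by iterating \eqref{eq:left-increment} as before; the probabilistic bookkeeping is identical with $j_\ast$ in place of $j$.

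The main obstacle is not the chaining, which is mechanical, but choosing the Markov thresholds so that they are simultaneously large enough for the first-moment sums to be controlled with the required probability and small enough that the resulting bounds still beat $|\eps_{m_j}|$ after being multiplied by the small deterministic factors. The delicate case is the block $I_j$ itself: its terms are separated from $\varphi_j$ only by the factor $e^{-a_j}$, and the choice $\beta<1/\alpha-1$ makes $a_j\asymp 2^jj^{1-1/\alpha}$ much smaller than the cross-block gap $2^jj^{-\beta}$, so $I_j$ must be given its own, smaller, threshold $e^{a_j/2}$; the argument survives only because $a_j$ is still far larger than $j^2$. A secondary point is that $n$ can be arbitrarily large compared with $j$ when $j\ll j_\ast$, so one cannot union-bound over all $n+1$ coefficients at once, and carrying out the first-moment estimate one block at a time is what keeps the total exceptional probability summable to $O(\exp(-cj^2))$.
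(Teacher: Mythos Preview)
Your proposal is correct and follows essentially the same approach as the paper: iterate \cref{lemma:increment-calculation} to get the deterministic block comparisons, use Condition~\eqref{assump:subgaussian} with Markov's inequality to control the sums $\sum|\eps_i|$, and use Condition~\eqref{assump:anticoncentration} to lower-bound $|\eps_{m_j}|$. The only organizational difference is that the paper bounds $\E\bigl[\max_{a_j\le s\le b_j}\sum_{i\ne m_j}|\eps_i|c_ie^{is}/\varphi_j(s)\bigr]$ directly by $e^{-\Omega_\alpha(j^2)}$ and then applies a single Markov inequality, whereas you first reduce to the endpoints via monotonicity and then apply Markov block-by-block with separate thresholds; both routes lead to the same bound.
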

\begin{proof}
We prove \eqref{eq:anticoncentration-full}; the proofs of \eqref{eq:anticoncentration-left}, \eqref{eq:anticoncentration-right} and \eqref{eq:highest-term} are similar, and we omit the details. 
We first show that the $m_j$ term dominates all others in its block.  To this end, since $c_i = c_{m_j}$ for all $m_{j-1}+1 \leq i \leq m_j$, we write $$c_{m_j} e^{m_j s}|\eps_{m_j}|/4 - \sum_{i = m_{j-1}+1}^{m_j - 1} |\eps_i| c_i e^{i s} \stackrel{\text{d}}{=} c_{m_j}e^{m_j s}\left( \frac{|\eps_{m_j}|}{4} - \sum_{i = 1}^{m_j - m_{j-1} -1} |\eps_i| e^{-is} \right),  $$
where $\stackrel{\text{d}}{=}$ denotes equality in distribution.  
Using Condition \eqref{assump:subgaussian}, we bound $$ \E \max_{a_j \leq s \leq b_j}\sum_{i = 1}^{m_j - m_{j-1} -1} |\eps_i| e^{-is} \leq 2 e^{- a_j} = e^{-\Omega_\alpha(j^2)}. $$
By Markov's inequality along with Condition \eqref{assump:anticoncentration}, we can bound \begin{align}
    \P\left( \min_{a_j \leq s \leq b_j} \left( c_{m_j} e^{m_j s}\frac{|\eps_{m_j}|}{4} - \sum_{i \in I_j \setminus j} |\eps_i| c_i e^{i s} \right)  \leq 0 \right)
    &\leq \P(|\eps_{m_j}| \leq C e^{-c j^2 / 2}) + e^{-\Omega_\alpha(j^2)} \nonumber \\
    &= e^{-\Omega_\alpha(j^2)}\,. \label{eq:dominate-block}
\end{align}
To handle the coefficients not in $I_j$, first note that \begin{align*}
    \sum_{i \notin I_j} |\eps_i| \frac{c_{i} e^{i s}}{c_{m_j}e^{m_js} } \leq \sum_{r \neq j}\frac{\varphi_r(s)}{\varphi_j(s)} \sum_{i \in I_r} |\eps_i|\,.
\end{align*}
By iteratively applying Lemma \ref{lemma:increment-calculation} and using Condition \eqref{assump:subgaussian}, we have \begin{align*}
    \E \max_{a_j \leq s \leq b_j} \left(\sum_{r \neq j}\frac{\varphi_r(s)}{\varphi_j(s)} \sum_{i \in I_r} |\eps_i|\right) 
    & \leq C m_j e^{-c j^2} + \sum_{r > j} C|I_r| e^{-c r^2} \\
    &= e^{-\Omega_\alpha(j^2)}\,.
\end{align*}
 By applying Markov's inequality and Condition \eqref{assump:anticoncentration} as in the proof of \eqref{eq:dominate-block}, one finds 
 \begin{equation}\label{eq:dominate-other-blocks}
    \P\left( \min_{a_j \leq s \leq b_j} \left( \frac{ c_{m_j} e^{m_j s}|\eps_{m_j}|}{4} - \sum_{i \notin I_j} |\eps_{i}| c_i e^{is} \right) \leq 0 \right) =  e^{-\Omega_\alpha(j^2)}\,. 
 \end{equation}
 Combining bounds \eqref{eq:dominate-block} and \eqref{eq:dominate-other-blocks} shows \eqref{eq:anticoncentration-full}.
\end{proof}

We now want to show that the number of positive real roots is the same as the number of sign changes among the coefficients $(\eps_{m_j})_{j}$.  We first show that there are no real roots in the interval  $[a_j,b_j]$ with high probability.

\begin{lemma}\label{lem:no-root}
    There exist constants $C, c > 0$ (depending only on $\alpha$ and $C_0$) so that the following holds.  For each $j \geq 1$ with $m_j \leq n$, with probability at least $1 - C e^{-c j^2}$ we have that $\sgn(g_n(s)) = \sgn(\eps_{m_j})$ for all $s \in [a_j,b_j]$.  In particular, with probability at least $1 - C  e^{-c j^2}$, $g_n(s)$ has no roots in $[a_j, b_j]$.  Similarly, for all $s \geq a_{j_\ast}$ we have $\sgn(g_n(s)) = \sgn(\eps_{n})$ 
    with probability at least $1 - C e^{-c j_\ast^2}$. 
\end{lemma}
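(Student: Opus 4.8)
The plan is to deduce this directly from Lemma \ref{lemma:anticoncentration} via the reverse triangle inequality. Fix $j \geq 1$ with $m_j \leq n$, so that the term $c_{m_j}\eps_{m_j} e^{m_j s}$ actually appears in $g_n(s) = \sum_{k=0}^n c_k \eps_k e^{k s}$. Since $c_{m_j} = \exp(-2^j) > 0$ and $e^{m_j s} > 0$, this term has sign $\sgn(\eps_{m_j})$, and it is nonzero almost surely because $\eps_{m_j} \neq 0$ almost surely by Condition \eqref{assump:anticoncentration}. First I would observe that for every $s \in [a_j, b_j]$,
\[ \bigl| g_n(s) - c_{m_j}\eps_{m_j} e^{m_j s} \bigr| \;\leq\; \sum_{\substack{0 \leq i \leq n \\ i \neq m_j}} c_i |\eps_i| e^{i s} \;\leq\; \sum_{i \neq m_j} c_i |\eps_i| e^{i s}. \]
On the event in \eqref{eq:anticoncentration-full}, which holds with probability at least $1 - C\exp(-c j^2)$ for constants $C, c$ depending only on $\alpha$ and $C_0$, this last sum is strictly less than $\tfrac12 c_{m_j} |\eps_{m_j}| e^{m_j s}$ simultaneously for all $s \in [a_j, b_j]$. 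Thus on this event $g_n(s)$ differs from the nonzero real number $c_{m_j}\eps_{m_j}e^{m_j s}$ by less than half of its absolute value, whence $g_n(s) \neq 0$ and $\sgn(g_n(s)) = \sgn\bigl(c_{m_j}\eps_{m_j}e^{m_j s}\bigr) = \sgn(\eps_{m_j})$ for every $s \in [a_j, b_j]$. In particular $g_n$ has no roots in $[a_j, b_j]$ on this event, which is precisely the first claim.

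For the statement concerning $s \geq a_{j_\ast}$ I would run the identical argument with \eqref{eq:highest-term} in place of \eqref{eq:anticoncentration-full}: on that event, which has probability at least $1 - C\exp(-c j_\ast^2)$, one has $\bigl| g_n(s) - c_n \eps_n e^{n s} \bigr| \leq \sum_{i < n} c_i |\eps_i| e^{i s} < c_n |\eps_n| e^{n s}$ for all $s \geq a_{j_\ast}$, and the reverse triangle inequality again yields $\sgn(g_n(s)) = \sgn(c_n \eps_n e^{n s}) = \sgn(\eps_n)$ for all such $s$.

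I do not anticipate a real obstacle here: all the analytic work is contained in Lemma \ref{lemma:anticoncentration}, and the remaining content is just the elementary fact that a real number lying within less than $|X|$ of a nonzero real number $X$ inherits the sign of $X$ (applied here with the ample margin $\tfrac12|X|$). The one bookkeeping point worth a remark is that the truncation to degree $n$ is harmless, since the finite sum $\sum_{0 \leq i \leq n,\, i \neq m_j} c_i |\eps_i| e^{i s}$ is dominated by the (possibly larger) sum bounded in \eqref{eq:anticoncentration-full}, and likewise for \eqref{eq:highest-term}; and one simply carries over the constants $C, c$ from Lemma \ref{lemma:anticoncentration}.
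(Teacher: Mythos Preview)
Your proposal is correct and follows essentially the same approach as the paper: both deduce the lemma directly from Lemma~\ref{lemma:anticoncentration} via the triangle inequality, showing that the dominant term $c_{m_j}\eps_{m_j}e^{m_j s}$ (respectively $c_n\eps_n e^{ns}$) controls the sign of $g_n(s)$. Your write-up is in fact slightly more explicit than the paper's about why the sign conclusion (and not merely nonvanishing) follows, but the underlying argument is identical.
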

\begin{proof}
    Applying the triangle inequality, we see that 
    $$|g_n(s)| \geq  c_{m_j}e^{m_js} |\eps_{m_j}| - \sum_{i \neq m_j} |\eps_{i}|  c_i e^{is} \,. $$
    Thus, the conclusion follows from Lemma \ref{lemma:anticoncentration}.   The assertion for $s \geq a_{j_\ast}$ follows from Lemma \ref{lemma:anticoncentration} as well.
\end{proof}

We next show that if there is no sign change among $\{\eps_{m_j},\eps_{m_{j+1}}\}$ then there is no real root on $[b_j,a_{j+1}]$.

\begin{lemma}\label{lem:no-change}
There exist constants $C, c > 0$ (depending only on $\alpha$ and $C_0$) so that the following holds.  For each $j < j_\ast$ with $m_{j+1} \leq n$, with probability at least $1 - C e^{-c j^2}$, if $\sgn(\eps_{m_j})= \sgn(\eps_{m_{j+1}})$ then $g_n(t)$ has no roots in the interval $[b_j,a_{j+1}]$.  Similarly, if $m_{j_\ast} > n$ and $\sgn(\eps_{m_{j_\ast -1 }}) =  \sgn(\eps_{n})$ then $g_n$ has no roots in the interval $[b_{j_\ast - 1},a_{j_\ast}]$  with probability at least $1 - C e^{-c j_\ast^2}$.
\end{lemma}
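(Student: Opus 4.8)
The plan is to show that on the transition interval $[b_j,a_{j+1}]$ the function $g_n$ keeps the common sign of $\eps_{m_j}$ and $\eps_{m_{j+1}}$, and hence has no zero there. First I would write
\[
g_n(t)=\varphi_j(t)\eps_{m_j}+\varphi_{j+1}(t)\eps_{m_{j+1}}+\sum_{i\neq m_j,\,m_{j+1}}c_i\eps_i e^{it},
\]
and note that $\varphi_j(t),\varphi_{j+1}(t)>0$ while $\sgn(\eps_{m_j})=\sgn(\eps_{m_{j+1}})$ by hypothesis, so the first two terms share a sign and the triangle inequality gives
\[
|g_n(t)|\ \ge\ \varphi_j(t)|\eps_{m_j}|+\varphi_{j+1}(t)|\eps_{m_{j+1}}|-R(t),\qquad R(t):=\sum_{i\neq m_j,\,m_{j+1}}c_i|\eps_i|e^{it}.
\]
It therefore suffices to prove that $R(t)<\varphi_j(t)|\eps_{m_j}|+\varphi_{j+1}(t)|\eps_{m_{j+1}}|$ for all $t\in[b_j,a_{j+1}]$ on an event of probability at least $1-Ce^{-cj^2}$.

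Next I would use that Lemma \ref{lemma:anticoncentration} already bounds most of $R$ on this interval, via the elementary inclusions $[b_j,a_{j+1}]\subseteq[a_j,\infty)$ and $[b_j,a_{j+1}]\subseteq(-\infty,b_{j+1}]$, both of which hold for $j$ large since $a_j\le b_j$ and $a_{j+1}\le b_{j+1}$ (and the interval is empty, or the probability bound vacuous, for the finitely many small $j$). Applying \eqref{eq:anticoncentration-left} to block $j$ gives, with probability at least $1-Ce^{-cj^2}$, the bound $\sum_{i<m_j}c_i|\eps_i|e^{it}<\tfrac12\varphi_j(t)|\eps_{m_j}|$ throughout $[b_j,a_{j+1}]$; applying \eqref{eq:anticoncentration-right} to block $j+1$ gives, with probability at least $1-Ce^{-c(j+1)^2}$, the bound $\sum_{i>m_{j+1}}c_i|\eps_i|e^{it}<\tfrac12\varphi_{j+1}(t)|\eps_{m_{j+1}}|$ there. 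The only part of $R(t)$ that remains is the within-block contribution of the indices $m_j<i<m_{j+1}$, all carrying the coefficient $c_{m_{j+1}}$; substituting $l=m_{j+1}-i$, this is
\[
R_{\mathrm{mid}}(t):=\varphi_{j+1}(t)\sum_{l=1}^{m_{j+1}-m_j-1}|\eps_{m_{j+1}-l}|\,e^{-lt}.
\]

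To handle $R_{\mathrm{mid}}$, I would note that each summand is decreasing in $t$ and that on the interval $t\ge b_j$, where $b_j$ is of order $2^j j^{1-1/\alpha}$ and in particular eventually exceeds any fixed power of $j$; hence $R_{\mathrm{mid}}(t)/\varphi_{j+1}(t)\le Z_j:=\sum_{l\ge1}|\eps_{m_{j+1}-l}|e^{-lb_j}$ on $[b_j,a_{j+1}]$, and Condition \eqref{assump:subgaussian} gives $\E Z_j\le\sum_{l\ge1}e^{-lb_j}\le 2e^{-b_j}$ for $j$ large. Since $Z_j$ depends only on $(\eps_i)_{m_j<i<m_{j+1}}$ and is thus independent of $\eps_{m_{j+1}}$, Markov's inequality together with the anti-concentration Condition \eqref{assump:anticoncentration} gives $\P(Z_j\ge\tfrac12|\eps_{m_{j+1}}|)=\P(|\eps_{m_{j+1}}|\le 2Z_j)\le 2C_0\E Z_j\le 4C_0e^{-b_j}\le Ce^{-cj^2}$. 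Intersecting the three events above (total failure probability at most $Ce^{-cj^2}$ by a union bound), one obtains $R(t)<\tfrac12\varphi_j(t)|\eps_{m_j}|+\varphi_{j+1}(t)|\eps_{m_{j+1}}|<\varphi_j(t)|\eps_{m_j}|+\varphi_{j+1}(t)|\eps_{m_{j+1}}|$, so $|g_n(t)|\ge\tfrac12\varphi_j(t)|\eps_{m_j}|>0$ for all $t\in[b_j,a_{j+1}]$ (using $|\eps_{m_j}|>0$ a.s.\ by \eqref{assump:anticoncentration}), which is the first assertion. The second assertion is proved the same way: when $m_{j_\ast}>n$ the leading term on $[b_{j_\ast-1},a_{j_\ast}]$ is $c_n\eps_n e^{nt}$ in place of $\varphi_{j_\ast}(t)\eps_{m_{j_\ast}}$, there is no index exceeding $n$ so the $\sum_{i>m_{j+1}}$ piece drops out, and the two remaining pieces are controlled by \eqref{eq:anticoncentration-left} for block $j_\ast-1$ and by the same $R_{\mathrm{mid}}$-type estimate over $m_{j_\ast-1}<i<n$, with $b_{j_\ast-1}$ playing the role of $b_j$.

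The step I expect to require the most care is the bound on $R_{\mathrm{mid}}$: one must verify that its mean, exponentially small in $b_j\sim 2^j j^{1-1/\alpha}$, still beats the $e^{-cj^2}$ error budget even after being weighed against a possibly tiny $|\eps_{m_{j+1}}|$ — and it is exactly here that the anti-concentration Condition \eqref{assump:anticoncentration} is needed. Everything else is bookkeeping on top of Lemma \ref{lemma:anticoncentration}, the only real point being that the transition interval $[b_j,a_{j+1}]$ lies inside both $[a_j,\infty)$ and $(-\infty,b_{j+1}]$, so that blocks $j$ and $j+1$ dominate all other terms between them.
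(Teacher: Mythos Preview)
Your proposal is correct and follows essentially the same approach as the paper: split the remainder into indices below $m_j$, above $m_{j+1}$, and strictly between them, control the first two pieces by \eqref{eq:anticoncentration-left} and \eqref{eq:anticoncentration-right} from Lemma~\ref{lemma:anticoncentration}, and handle the middle block by the same Markov-plus-anti-concentration argument that gave \eqref{eq:dominate-block}. The only cosmetic difference is that the paper allocates half of each of the two dominant terms to absorb the middle block, whereas you use only half of the $m_{j+1}$ term; both work since the middle indices all carry coefficient $c_{m_{j+1}}$.
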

\begin{proof}
    Assume without loss of generality that $\sgn(\eps_{m_j})= \sgn(\eps_{m_{j+1}}) = +1$.  Then for any $s \in [b_j,a_{j+1}]$, we bound 
    \begin{align*}
        g_n(s) &\geq \left( \frac{\eps_{m_j}c_{m_j} e^{m_j s}}{2} - \sum_{i < m_j} |\eps_{i}| c_i e^{is}\right) + \left( \frac{\eps_{m_{j+1}}c_{m_{j+1}} e^{m_{j+1} s}}{2} - \sum_{i > m_{j+1}} |\eps_{i}| c_i e^{is}\right) \\
        &\qquad + \left( \frac{\eps_{m_j}c_{m_j} e^{m_j s}}{2} + \frac{\eps_{m_{j+1}}c_{m_{j+1}} e^{m_{j+1} s}}{2} - \sum_{i = m_{j} + 1}^{m_{j+1}-1} |\eps_{i}| c_i e^{is} \right)
    \end{align*}
    The first two terms can be bounded using \eqref{eq:anticoncentration-left} and \eqref{eq:anticoncentration-right} from Lemma \ref{lemma:anticoncentration}.  The third term on the right-hand side can be bounded by following a nearly identical argument which led to \eqref{eq:dominate-block}.  The assertion for $m_{j_\ast} > n$ follows in a similar way from Lemma \ref{lemma:anticoncentration}. 
\end{proof}

We now handle the other case and show that if there is a sign change among $\{\eps_{m_j},\eps_{m_{j+1}}\}$ then there is precisely one real root on $[b_j,a_{j+1}]$.  For this, we show that after rescaling our function $g_n$, the derivative does not have a sign change with high probability.

\begin{lemma}\label{lem:yes-change}
There exist constants $C, c > 0$ (depending only on $\alpha$ and $C_0$) so that the following holds.  For each $j \geq 1$ with $m_{j+1} \leq n$, with probability at least $1 - C e^{-c j^2}$, if $\sgn(\eps_{m_j}) = -\sgn(\eps_{m_{j+1}})$, then $g_n$ has a single root in the interval $[b_j,a_{j+1}]$.  Similarly, if $m_{j_\ast} > n$ and $\sgn(\eps_{m_{j_\ast -1 }}) = - \sgn(\eps_{n})$ then $g_n$ has a single root in the interval $[b_{j_\ast - 1},a_{j_\ast}]$  with probability at least $1 - C e^{-c j_\ast^2}$.
\end{lemma}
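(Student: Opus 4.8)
The plan is to pass to the derivative of a suitably rescaled copy of $g_n$ and then recycle the domination machinery behind Lemma~\ref{lemma:anticoncentration}. Assume without loss of generality that $\sgn(\eps_{m_{j+1}}) = +1$ and $\sgn(\eps_{m_j}) = -1$. Set $\mu := \tfrac12(m_j+m_{j+1})$ and put $H(s) := e^{-\mu s}g_n(s)$, which for every $s$ vanishes exactly when $g_n(s)$ does. Since $H'(s) = e^{-\mu s}\bigl(g_n'(s) - \mu g_n(s)\bigr)$ and
\[
  g_n'(s) - \mu g_n(s) \;=\; \sum_{k=0}^{n} (k-\mu)\,c_k\eps_k e^{ks},
\]
it suffices to prove that, with probability at least $1 - Ce^{-cj^2}$, one has $g_n'(s) - \mu g_n(s) > 0$ for all $s \in [b_j,a_{j+1}]$. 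Indeed, this makes $H$ strictly increasing on $[b_j,a_{j+1}]$, so $g_n$ has at most one zero there, and it is simple since $g_n'(s_0) = e^{\mu s_0}H'(s_0) \neq 0$ at a zero $s_0$ of $g_n$. On the other hand Lemma~\ref{lem:no-root}, applied to blocks $j$ and $j+1$, gives $\sgn(g_n(b_j)) = \sgn(\eps_{m_j}) = -1$ and $\sgn(g_n(a_{j+1})) = \sgn(\eps_{m_{j+1}}) = +1$ (with probability $\geq 1 - Ce^{-cj^2}$), so the intermediate value theorem produces at least one zero.

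The point of the rescaling is that the two would-be dominant terms of $g_n' - \mu g_n$ now have the \emph{same} sign and reinforce rather than cancel. Since $m_j < \mu < m_{j+1}$ and $\sgn(\eps_{m_j}) = -1 = -\sgn(\eps_{m_{j+1}})$, both $(m_j-\mu)\eps_{m_j} = \tfrac12(m_{j+1}-m_j)|\eps_{m_j}|$ and $(m_{j+1}-\mu)\eps_{m_{j+1}} = \tfrac12(m_{j+1}-m_j)|\eps_{m_{j+1}}|$ are nonnegative, and $m_{j+1}-m_j \geq 2$ because $1/\alpha > 1$ forces $\lfloor (j+1)^{1/\alpha}\rfloor \geq \lfloor j^{1/\alpha}\rfloor + 1$. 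Hence the $k=m_j$ and $k=m_{j+1}$ terms together contribute at least $c_{m_j}|\eps_{m_j}|e^{m_js} + c_{m_{j+1}}|\eps_{m_{j+1}}|e^{m_{j+1}s}$, and it remains to show that, with probability $\geq 1 - Ce^{-cj^2}$,
\[
  \sum_{k\notin\{m_j,\,m_{j+1}\}} |k-\mu|\,c_k|\eps_k|e^{ks} \;<\; \tfrac34\bigl( c_{m_j}|\eps_{m_j}|e^{m_js} + c_{m_{j+1}}|\eps_{m_{j+1}}|e^{m_{j+1}s} \bigr) \qquad\text{for all } s\in[b_j,a_{j+1}].
\]

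To bound the left-hand side I split it according to whether $k < m_j$, $k > m_{j+1}$, or $m_j < k < m_{j+1}$, bounding each piece by $\tfrac14$ of a term on the right. For $k < m_j$ one has $|k-\mu|\leq\mu\leq m_{j+1} = O_\alpha(j^{1/\alpha})$, so after extracting this polynomial factor the required estimate is exactly \eqref{eq:anticoncentration-left} for block $j$ (valid for all $s\geq a_j$, hence on $[b_j,a_{j+1}]$) with the constant $\tfrac12$ replaced by the polynomially small $\tfrac1{4\mu}$; since $\mu = e^{O(\log j)}$, the argument establishing \eqref{eq:anticoncentration-left} goes through unchanged and yields probability $1 - Ce^{-cj^2}$. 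For $k > m_{j+1}$ the relevant blocks are $I_r$ with $r\geq j+2$, where $|k-\mu|\leq m_r = O_\alpha(r^{1/\alpha})$; this polynomial-in-$r$ weight is absorbed by the super-polynomially small ratios $\varphi_r/\varphi_{j+1}$ from Lemma~\ref{lemma:increment-calculation}, so \eqref{eq:anticoncentration-right} for block $j+1$ (valid for $s\leq b_{j+1}$, and $[b_j,a_{j+1}]\subseteq(-\infty,b_{j+1}]$), adapted in the same way, handles this piece. Finally the middle indices all carry coefficient $c_{m_{j+1}}$, so that piece equals $c_{m_{j+1}}e^{m_{j+1}s}\sum_{m_j<k<m_{j+1}}|k-\mu|\,|\eps_k|\,e^{(k-m_{j+1})s}$; since $e^{(k-m_{j+1})s}\leq e^{-s}\leq e^{-b_j}$ is doubly-exponentially small in $j$ and $|k-\mu| = O_\alpha(j^{1/\alpha})$, Markov's inequality and Condition~\eqref{assump:anticoncentration} applied to $|\eps_{m_{j+1}}|$ — exactly as in the derivation of \eqref{eq:dominate-block} — bound it by $\tfrac14 c_{m_{j+1}}|\eps_{m_{j+1}}|e^{m_{j+1}s}$ with probability $1 - Ce^{-cj^2}$. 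A union bound over these events together with the two from Lemma~\ref{lem:no-root} completes the proof. The case $m_{j_\ast}>n$ is identical, with $m_{j+1}$ replaced by $n$, $\mu := \tfrac12(m_{j_\ast-1}+n)$, and \eqref{eq:highest-term} playing the role of \eqref{eq:anticoncentration-right}.

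The main obstacle is the third paragraph: one must verify that the extra weights $|k-\mu|$ — crudely as large as $n$, which dwarfs every power of $j$ when $j \ll j_\ast$ — never destroy the domination. This works precisely because, block by block, these weights are either merely polynomial in $j$ (the three ``near'' blocks $I_{j-1}\cup I_j\cup I_{j+1}$) or are annihilated by the super-polynomial decay of $\varphi_r/\varphi_j$ and $\varphi_r/\varphi_{j+1}$ supplied by Lemma~\ref{lemma:increment-calculation} (the ``far'' blocks), so the structure of the proof of Lemma~\ref{lemma:anticoncentration} survives up to polynomial-in-$j$ changes of constants. A minor preliminary point is that $[b_j,a_{j+1}]$ is a genuine nonempty interval for all large $j$: one computes $a_{j+1}/b_j = (1+1/j)^{1-1/\alpha}\cdot\frac{1+(j+1)^{-\beta}}{1-j^{-\beta}}$, which tends to $1$ from above because $\beta\leq 1/2<1$ makes the second factor's excess over $1$ (of order $j^{-\beta}$) dominate the first factor's deficit (of order $j^{-1}$).
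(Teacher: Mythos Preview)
Your proof is correct and follows essentially the same approach as the paper: both rescale $g_n$ by $e^{-\mu s}$ with $\mu = \tfrac12(m_j+m_{j+1})$, use Lemma~\ref{lem:no-root} to force a sign change, and then show the derivative of the rescaled function is positive on $[b_j,a_{j+1}]$ by verifying that the extra weights $|k-\mu|$ (written as $\tfrac{2i-m_{j+1}-m_j}{m_{j+1}-m_j}$ in the paper) are absorbed by the domination machinery of Lemma~\ref{lemma:anticoncentration}. Your three-way split and discussion of why far-block weights are harmless are more explicit than the paper's terse ``with only slight changes to account for this extra factor,'' but the argument is the same.
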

\begin{proof}
    We prove the first assertion as the second is similar. 
    By replacing $g_n$ with $-g_n$ if necessary, assume without loss of generality that $\eps_{m_{j+1}} > 0$ and $\eps_{m_j} < 0$.  Write $g_n(t) = e^{t(m_{j+1} + m_j)/2} h_n(t)$ with 
    \begin{align*}
    	h_n(t) := \eps_{m_j} &c_{m_{j}}e^{-t(m_{j+1} - m_j)/2} + \eps_{m_{j+1}}c_{m_{j+1}} e^{t(m_{j+1}-m_j)/2} \\
	&+ \sum_{ i \notin \{m_j,m_{j+1}\} } \eps_{i}c_{i} e^{t(i - m_{j+1}/2 - m_j/2)}\,.   
    \end{align*}
    It is sufficient to show that $h_n(t)$ has precisely one root in $[b_j, a_{j+1}]$.  By Lemma \ref{lem:no-root}, $h_n$ exhibits a sign change on the interval $[b_j,a_{j+1}]$ with probability at least $1 - C e^{-c j^2}$.  Thus, it will be sufficient to show that with probability at least $1 - C  e^{-c j^2}$, $h_n'$ is positive on the interval.  We compute \begin{align*}
        \frac{2 e^{t(m_{j+1} + m_j)/2} }{m_{j+1} - m_j}h_n'(t) &= (-\eps_{m_j}) c_{m_j} e^{m_j t} + \eps_{m_{j+1}}  c_{m_{j+1}} e^{m_{j+1} t} \\
        &\qquad + \sum_{ i \notin \{m_j,m_{j+1}\} } \eps_{i} c_{i} e^{it}\cdot \frac{2i - m_{j+1} - m_j}{m_{j+1} - m_j}\,.
    \end{align*}

    Recalling that $(-\eps_{m_j}) > 0$ and $\eps_{m_{j+1}} > 0$, it suffices to show that 
    \begin{equation*}
         |\eps_{m_j}| c_{m_j} e^{m_j t} - \sum_{ i < m_j } |\eps_{i}| c_i e^{it} \left| \frac{2i - m_{j+1} - m_j}{m_{j+1} - m_j}\right| > 0
    \end{equation*}
    and 
    \begin{equation*}
        |\eps_{m_{j+1}}|c_{m_{j+1}} e^{m_{j+1} t} - \sum_{ i > m_j, i \neq m_{j+1} } |\eps_{i}| c_i e^{it} \left| \frac{2i - m_{j+1} - m_j}{m_{j+1} - m_j}\right| > 0 
    \end{equation*}
    with probability at least $1 - C e^{-c j^2}$.  
    Since 
    \[ \left| \frac{2i - m_{j+1} - m_j}{m_{j+1} - m_j} \right| \leq 2i + 2m_j \]
    (this follows from the triangle inequality since $m_{j+1} - m_j \geq 1$), the conclusion follows by applying the proof of Lemma \ref{lemma:anticoncentration} (with only slight changes to account for this extra factor of $2i + 2m_j$). 
\end{proof}

Since our previous lemmas are all asymptotic as $j \to \infty$, we now need to show that the behavior of the first few terms does not contribute too many roots close to the origin.  A simple application of Rouch\'e's theorem will allow us to bound the number of roots in a ball centered at $0$:

\begin{lemma}\label{lem:rouche}
There exist constants $C, c > 0$ (depending only on $\alpha$ and $C_0$) so that the following holds.  For each $j \geq 1$ with $m_j < n$, with probability at least $1 - C  e^{-c j^2}$, the polynomial $f_n(z)$ has at most $m_j$ roots in the disk $\{z \in \mathbb{C} : |z| \leq e^{b_j} \}$.    
\end{lemma}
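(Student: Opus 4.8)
The plan is to apply Rouch\'e's theorem on the circle $\{|z| = e^{b_j}\}$, comparing $f_n$ with its low-degree truncation
\[ g(z) := \sum_{k=0}^{\min\{m_j,n\}} c_k \eps_k z^k, \]
which is a polynomial of degree at most $m_j$. The goal is to show that, with probability at least $1 - Ce^{-cj^2}$, one has $|f_n(z) - g(z)| < |g(z)|$ for every $z$ with $|z| = e^{b_j}$. Rouch\'e's theorem then gives that $f_n$ and $g$ have the same number of zeros (with multiplicity) in $\{|z| < e^{b_j}\}$, hence at most $\deg g \le m_j$; and the strict inequality also forces $f_n(z)\neq 0$ on the circle, so the same bound holds on the closed disk. (We may assume $m_j < n$, since otherwise $f_n$ itself has degree at most $m_j$ and the statement is trivial.)

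First I would dispose of small $j$: there is a constant $j_0 = j_0(\alpha)$ beyond which $a_j < b_j$ (by the remark following the definitions of $a_j,b_j$), and for $j \le j_0$ the claimed probability lower bound $1 - Ce^{-cj^2}$ is made vacuous by enlarging $C$. So fix $j > j_0$ and work on the intersection $\mathcal{E}_j$ of the events in \eqref{eq:anticoncentration-left} and \eqref{eq:anticoncentration-right} of Lemma \ref{lemma:anticoncentration}, which satisfies $\P(\mathcal{E}_j) \ge 1 - Ce^{-cj^2}$.

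On $\mathcal{E}_j$, for any $z$ with $|z| = e^{b_j}$ we have $|z^k| = e^{kb_j}$, so $|c_k\eps_k z^k| = c_k|\eps_k|e^{kb_j}$. The triangle inequality together with \eqref{eq:anticoncentration-left} evaluated at $s=b_j$ (valid since $b_j \ge a_j$) gives
\[ |g(z)| \ge c_{m_j}|\eps_{m_j}|e^{m_j b_j} - \sum_{i < m_j} c_i|\eps_i|e^{ib_j} \ge \frac{1}{2} c_{m_j}|\eps_{m_j}|e^{m_j b_j} > 0, \]
where positivity uses that $\mathcal{E}_j \subseteq \{|\eps_{m_j}| > 0\}$ (otherwise the infimum in \eqref{eq:anticoncentration-left} could not be positive, since the subtracted sum grows like $e^{(m_j-1)s}$ as $s\to\infty$). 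On the other hand, \eqref{eq:anticoncentration-right} evaluated at $s=b_j$ gives
\[ |f_n(z) - g(z)| \le \sum_{i > m_j} c_i|\eps_i|e^{ib_j} < \frac{1}{2} c_{m_j}|\eps_{m_j}|e^{m_j b_j} \le |g(z)|. \]
Since these bounds hold uniformly over the circle, $|f_n - g| < |g|$ on $\{|z| = e^{b_j}\}$, and Rouch\'e's theorem completes the proof.

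The argument has no serious obstacle: the one quantitative input needed — that the $m_j$-term dominates both the lower-index and the higher-index terms, uniformly, when evaluated at $s = b_j$ — is precisely what Lemma \ref{lemma:anticoncentration} delivers, and on the complex circle $\{|z| = e^{b_j}\}$ every relevant modulus collapses to the corresponding real-variable expression appearing there. The only care required is the routine bookkeeping for small $j$ and the observation that the good event already rules out the degenerate case $g \equiv 0$.
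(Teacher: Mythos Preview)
Your proof is correct and follows essentially the same approach as the paper: Rouch\'e's theorem on the circle $|z|=e^{b_j}$, with the comparison reduced to the real-variable domination estimates of Lemma~\ref{lemma:anticoncentration}. The only difference is cosmetic: the paper compares $f_n$ with the single monomial $\eps_{m_j}c_{m_j}z^{m_j}$ and invokes \eqref{eq:anticoncentration-full} once, whereas you compare with the degree-$m_j$ truncation and split the estimate into \eqref{eq:anticoncentration-left} and \eqref{eq:anticoncentration-right}.
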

\begin{proof}
    We will use Rouch\'e's theorem.  In particular, if we show that for all $z \in \C$ with $|z| = e^{b_j}$ we have  
    \begin{equation}
    |\eps_{m_j} c_{m_j} z^{m_j}| > \left|\sum_{i \neq m_j} \eps_{i} c_{i} z^{i}\right|,
    \end{equation}
    then we will have that $f_n(z)$ has at most $m_j$ roots in the disk $\{z \in \mathbb{C} : |z| \leq e^{b_j} \}$.  We lower bound \begin{align*}
       \min_{|z| = e^{b_j}} \left(|\eps_{m_j} c_{m_j} z^{m_j}| - \left|\sum_{i \neq m_j} \eps_{i} c_{i} z^{i}\right|\right) &\geq  |\eps_{m_j}| c_{m_j} e^{m_jb_j} - \sum_{i \neq m_j} |\eps_i| c_i e^{ib_j} 
    \end{align*}
    and hence the conclusion follows from Lemma \ref{lemma:anticoncentration}.  
\end{proof}

With the lemmas above in hand, we are now ready to complete the proof of Theorem \ref{thm:main}.  

\begin{proof}[Proof of Theorem \ref{thm:main}]
Choose $k = \log n$ and apply Lemma \ref{lem:rouche} to see that, with overwhelming probability, the number of real roots of $f_n$ differs from the number of real roots in $\{z \in \mathbb{R} : |z| > e^{b_k}\}$ by at most $2(\log n)^{1/\alpha}$.  Combining Lemmas \ref{lem:no-root}, \ref{lem:no-change} and \ref{lem:yes-change} shows that, with overwhelming probability, the number of real roots in $\{t \in \mathbb{R} : t > e^{b_k}\}$ is equal to the number of sign changes of the sequence $(\eps_{m_j})_{j = k}^{j_\ast - 1} \cup (\eps_n)$; similarly, the same argument shows that the number of \emph{negative} real roots in $\{t \in \mathbb{R} : t < - e^{b_k}\}$ is also equal to the number of sign changes of $(\eps_{m_j})_{j = k}^{j_\ast - 1} \cup (\eps_n)$.
This shows that $|R_n - 2S_n| \leq 4(\log n)^{1/\alpha}$ with overwhelming probability.  
    The other conclusions of Theorem \ref{thm:main} follow from this bound.  For example, the classical law of large numbers for $S_n$ implies the same conclusion for $R_n$.  
\end{proof}

\section*{Acknowledgements} 
The authors thank the anonymous referee for useful feedback and corrections.

\bibliographystyle{abbrv}
\bibliography{main.bib}

\end{document}